\newtheorem{theorem}{Theorem}[section]
\newtheorem*{theorem*}{Theorem}
\newtheorem{corollary}[theorem]{Corollary}
\newtheorem{definition}[theorem]{Definition}
\newcommand{\C}{\mathbb{C}}
\title[Note on Milnor numbers of irreducible germs ]{Note on Milnor numbers of irreducible germs }
\author{Zbigniew Jelonek}
\address{Zbigniew Jelonek -
Instytut Matematyczny, Polska Akademia Nauk, \'Sniadeckich 8, 00-656 Warszawa, Poland.}
\email{najelone@cyf-kr.edu.pl}
\begin{document}

\subjclass[2010]{ 14R15, 14R99, 14P10}

\begin{abstract}
Let $(\bf {V,0})\subset (\C^n,0)$ be a germ of a complex hypersurface and let $f: (\C^n,0)\to(\C^n,0)$ be a germ of a finite holomorphic mapping. If germs   $(\bf {V,0})$ and ${\bf W}:=(F^{-1}(\bf{ V})),0)$ are irreducible and with isolated singularities,  then $$\mu(F^{-1}(\bf{ V}))\ge \mu(\bf {V}),$$ where $\mu$ denotes the Milnor number.
\end{abstract}
\maketitle

 \section{Introduction}

\medskip

This paper is devoted to proving the following  theorem:

\begin{theorem} Let $(\bf {V,0})\subset (\C^n,0)$ be a germ of a complex hypersurface and let $f: (\C^n,0)\to(\C^n,0)$ be a germ of a finite holomorphic mapping. If germs  germ $(\bf {V,0})$ and ${\bf W}:=(F^{-1}(\bf{ V})),0)$ are irreducible and with isolated singularities,  then $$\mu(F^{-1}(\bf{ V}))\ge \mu(\bf {V}),$$ where $\mu$ denotes the Milnor number.
\end{theorem}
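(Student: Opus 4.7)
The plan is to reduce the inequality to a colength estimate via the chain rule, exploiting the flatness of finite maps between smooth germs of the same dimension. Let $g$ be a reduced equation of $V$ in the target $\On$, so that $\mu(V)=\dim_\C\On/J(g)$ with $J(g)=(\partial g/\partial y_1,\ldots,\partial g/\partial y_n)$. Let $h$ be a reduced equation of $W$ in the source; since $W$ is irreducible one can write $g\circ F=u\,h^{e}$ for some unit $u\in\On$ and some integer $e\ge 1$, and $\mu(W)=\dim_\C\On/J(h)$.

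The key inclusion is $J(g\circ F)\subset F^{*}(J(g))\cdot\On$, which follows at once from the chain rule
\[
\frac{\partial(g\circ F)}{\partial x_i}=\sum_j \Bigl(\frac{\partial g}{\partial y_j}\!\circ F\Bigr)\frac{\partial F_j}{\partial x_i}.
\]
Because $F$ is a finite map between two smooth germs of equal dimension, miracle flatness makes the source $\On$ a free module of rank $d:=\deg_0 F$ over the target $\On$. For the $\mathfrak{m}$-primary ideal $J(g)$ this gives $\dim_\C\On/F^{*}(J(g))\cdot\On=d\,\mu(V)$, and combined with the chain rule inclusion it yields
\[
\dim_\C\On/J(g\circ F)\;\ge\;d\,\mu(V).
\]

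If $e=1$ then $g\circ F=u h$, and invariance of the Milnor number of a hypersurface germ under multiplication of a defining equation by a unit gives $\dim_\C\On/J(g\circ F)=\mu(W)$; since $d\ge 1$ this yields the stronger inequality $\mu(W)\ge d\,\mu(V)\ge\mu(V)$ and we are done. The main obstacle is the case $e\ge 2$: then $J(g\circ F)\subset(h^{e-1})$ is not $\mathfrak{m}$-primary and the previous colength estimate degenerates to $\infty\ge d\,\mu(V)$, giving no information about $\mu(W)=\dim_\C\On/J(h)$. Geometrically this is the situation in which the irreducible $V$ is a component of the branch divisor of $F$, and $W$ is a component of the critical set of $F$. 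The expectation is that under the hypotheses (both $V$ and $W$ irreducible and with isolated singularities) one can show $e=1$, reducing to the previous case; otherwise one must pass to the Milnor fibrations, in which $F$ restricts to a finite branched covering between suitable representatives of the Milnor fibres of $W$ and of $V$, and deduce the inequality from a Riemann--Hurwitz-type comparison of Euler characteristics. Establishing this reduction is where I expect the real work of the proof to lie.
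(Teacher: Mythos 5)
Your algebraic argument is correct as far as it goes, but it only settles the case $e=1$, and the reduction you hope for --- showing that the hypotheses force $e=1$ --- is not available: nothing in the assumptions prevents $V$ from lying in the discriminant of $F$, and then the Nullstellensatz relation $g\circ F=u\,h^{e}$ genuinely has $e\ge 2$ (already $F(x,y)=(x,y^{2})$ with $V=\{y_{2}=0\}$ gives $e=2$, and such ramification is compatible with $V$ and $W$ being irreducible with isolated singularities). So the case you yourself flag as ``where the real work lies'' is exactly the case the theorem must cover, and your colength estimate collapses there because $J(g\circ F)\subset(h^{e-1})$ is no longer $\mathfrak{m}$-primary. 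The fallback you sketch, a Riemann--Hurwitz comparison of Euler characteristics, is also not carried out and would not obviously suffice: for a branched covering one gets an identity of the form $\chi(\mathrm{source})=d\,\chi(\mathrm{target})-(\mathrm{ramification\ terms})$, and translating this into the inequality $\mu(W)\ge\mu(V)$ for the middle Betti numbers of the Milnor fibres runs into sign and parity issues, besides requiring control of the ramification locus that you have not established. As it stands, the proposal proves only the unramified case (where it even yields the stronger bound $\mu(W)\ge d\,\mu(V)$), and leaves the theorem itself unproved.

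For comparison, the paper's proof is topological and handles all $e\ge 1$ at once. One fixes Milnor radii compatibly for $g$ on a ball $B(0,\epsilon)$ and for $h$ on a small ball $B'$ with $F(B')\subset B(0,\epsilon)$, and uses $(g-c)\circ F=\prod_{i=1}^{r}(h-\alpha_{i}c^{1/r})$ to see that a fibre $\{h=c^{1/r}\}$ is mapped onto the Milnor fibre $\{g=c\}$. The key input is that a finite surjective holomorphic map induces a surjection on homology (the paper cites Wells), so every generating $(n-1)$-cycle of the Milnor fibre $A_{c}$ of $g$ lifts to a cycle in $F^{-1}(A_{c})\cap\{h=c^{1/r}\}$, which sits inside the Milnor fibre $B_{c}'$ of $h$; since by Milnor's theorem the ranks of $H_{n-1}$ of these fibres are the Milnor numbers, the images $F_{*}[\beta_{i}]=[\alpha_{i}]$ generate $H_{n-1}(A_{c}')$ and one concludes $\mu(W)=\mathrm{rank}\,H_{n-1}(B_{c}')\ge\mathrm{rank}\,H_{n-1}(A_{c}')=\mu(V)$. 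If you want to salvage your approach, this homological surjectivity for finite maps is the missing ingredient that replaces both the $e=1$ reduction and the Euler-characteristic count.
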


As a Corollary we obtain the main result of \cite{tw}:

\begin{corollary} Let $(\bf {V,0})\subset (\C^n,0)$ be a germ of a complex hypersurface and let $f: (\C^n,0)\to(\C^n,0)$ be a germ of a finite holomorphic mapping. Then if the germ $(F^{-1}(\bf{ V})),0)$ taken with the reduced structure is smooth, then also the germ 
$({\bf V,0})$ is smooth.
\end{corollary}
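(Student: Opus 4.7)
The plan is to deduce the corollary by invoking Theorem~1.1 applied to $\mathbf{V}$ and $\mathbf{W} := (F^{-1}(\mathbf{V}))_{\mathrm{red}}$. The hypothesis that $(\mathbf{W},0)$ is smooth supplies two of the theorem's requirements for free: a smooth hypersurface germ is irreducible and has Milnor number $0$, so $\mu(\mathbf{W}) = 0$. Once the analogous requirements are verified for $\mathbf{V}$, Theorem~1.1 yields $\mu(\mathbf{V}) \le \mu(\mathbf{W}) = 0$, and a hypersurface germ with isolated singular point and vanishing Milnor number is smooth.

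Irreducibility of $\mathbf{V}$ is the easy step. Since $F$ is finite, it is in particular surjective as a germ, so $F(F^{-1}(\mathbf{V})) = \mathbf{V}$; thus $\mathbf{V}$ is the image of the irreducible germ $\mathbf{W}$ under a holomorphic map, and is therefore irreducible.

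The main obstacle is to show that $(\mathbf{V},0)$ has an isolated singularity, and I would proceed by contradiction. Assume $\dim \Sing{\mathbf{V}} \ge 1$ at $0$, and pick a positive-dimensional irreducible subvariety $\gamma \subset \Sing{\mathbf{V}}$ through $0$. Let $g$ be a reduced equation for $\mathbf{V}$ and $h$ a reduced equation for $\mathbf{W}$; irreducibility of $\mathbf{W}$ gives $g \circ F = u\, h^k$ for some unit $u$ and integer $k \ge 1$. For any $q \in F^{-1}(\gamma) \cap \mathbf{W}$, the chain rule yields $d(g \circ F)(q) = dg(F(q))\, dF(q) = 0$ because $F(q) \in \Sing{\mathbf{V}}$. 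When $k = 1$, comparing with $d(uh)(q) = u(q)\, dh(q)$ (using $h(q) = 0$) forces $dh(q) = 0$, contradicting the smoothness of $\mathbf{W}$. The more delicate case is $k \ge 2$, in which $\mathbf{W}$ must be contained in the critical locus of $F$; here I would exploit the local normal form of a finite holomorphic map ramified along a smooth divisor, together with the finiteness of $F$ and the resulting upper bound on the multiplicity of $\mathbf{V}$ at $0$, to derive the required contradiction. With the isolated singularity of $\mathbf{V}$ in hand, Theorem~1.1 applies and yields the conclusion.
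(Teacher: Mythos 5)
Your reduction is the same as the paper's in the isolated-singularity case: once $\mathbf{V}$ is irreducible with (at most) an isolated singular point, Theorem \ref{main-theorem} gives $\mu(\mathbf{V})\le\mu(\mathbf{W})=0$ and hence smoothness; the irreducibility of $\mathbf{V}$ and the case $k=1$ of your chain-rule computation are fine. The genuine gap is the case $k\ge 2$, i.e.\ precisely when $\mathbf{W}$ lies in the critical locus of $F$ — which is the actual content of the statement (already $F(x,y)=(x,y^2)$, $V=\{y=0\}$ has $k=2$, so this is not a marginal case). There you offer only a plan: ``local normal form of a finite map ramified along a smooth divisor'' plus ``an upper bound on the multiplicity of $\mathbf{V}$ at $0$''. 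Such a normal form is available only at generic points of $\mathbf{W}$ where the image is smooth, not at $0$ or at points of $F^{-1}({\rm Sing}(\mathbf{V}))$, which is where you need it; and a multiplicity bound cannot by itself exclude a positive-dimensional singular locus: from $g\circ F=u\,h^k$ one only gets ${\rm mult}_0(g)\le k$, and a germ such as the Whitney umbrella $x^2-y^2z$ is irreducible, has multiplicity $2$, and has a one-dimensional singular locus. So the step ``$(\mathbf{V},0)$ has an isolated singularity'' — the only hard step — is not proved.

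For comparison, the paper does not try to prove isolated singularity of $\mathbf{V}$ directly; it argues by induction on $n$. If ${\rm Sing}(\mathbf{V})$ has positive dimension, a generic hyperplane $L$ meets it (Lemma 2.5 of \cite{j-b}), so $L\cap V$ is singular, while $F^{-1}(L)$ and $W\cap F^{-1}(L)$ are smooth because the pulled-back linear system is base-point free on $\C^n$ resp.\ on the smooth $W$ (a Bertini-type argument). Restricting $F$ to $F^{-1}(L)\to L$ then produces a lower-dimensional finite map pulling back a singular hypersurface germ to a smooth one, contradicting the induction hypothesis; the base case is exactly the isolated-singularity situation you did handle via Theorem \ref{main-theorem}. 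Some such dimension-reduction (or an equivalent device) is what your argument is missing for $k\ge 2$.
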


\section{Main results}

We start with

\begin{definition}  
Let $(\bf {V,0})\subset (\C^n,0)$ be a germ of a complex hypersurface with an isolated singularity at $0$ and let ${\bf f}\in \mathcal{O}_0$ be a generator of the ideal $I(\bf {V,0})$ Then the Milnor number $\mu({\bf f})={\rm dim} \ \mathcal{O}_0/(\frac {\partial{\bf f}}{\partial {x_1}},..., \frac{ \partial{\bf f}}{\partial {x_n}}) $ does not depend on $\bf f$ but only on $(\bf {V,0}).$ We write $\mu(\bf {V,0}):=\mu(f).$ 
\end{definition}

It is easy to see that this definition is well-defined, i.e., the number $\mu$ does not depend on $f$ but only on $(\bf {V,0}).$ Now we can prove our main result:

\begin{theorem}\label{main-theorem} Let $(\bf {V,0})\subset (\C^n,0)$ be a germ of a complex hypersurface and let $f: (\C^n,0)\to(\C^n,0)$ be a germ of a finite holomorphic mapping. If germs  germ $(\bf {V,0})$ and ${\bf W}:=(F^{-1}(\bf{ V})),0)$ are irreducible and with isolated singularities,  then $$\mu(F^{-1}(\bf{ V}))\ge \mu(\bf {V}).$$
\end{theorem}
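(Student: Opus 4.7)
The plan is to reduce the inequality to a topological statement about Milnor fibers and then invoke the transfer map for finite surjective holomorphic maps. First, I would pick reduced generators $g\in\mathcal{O}_0$ of $I(\mathbf{V,0})$ and $h\in\mathcal{O}_0$ of $I(\mathbf{W,0})$. Since $\mathbf{W}$ is irreducible the ideal $(h)\subset\mathcal{O}_0$ is prime, and because $g\circ F\in (h)$ unique factorization in $\mathcal{O}_0$ forces $g\circ F=u\,h^k$ for some unit $u$ and some integer $k\ge 1$. The unit $u$ admits a holomorphic $k$-th root near $0$, which I absorb into $h$; thus I may assume $g\circ F=h^k$.

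Next I pass to Milnor fibers. Fix radii $0<\delta\ll\epsilon$ lying inside the Milnor-ball ranges for $h$ and $g$ respectively, arranged so that $F(\overline{B_\delta})\subset B_\epsilon$, and a small nonzero regular value $\eta$ of $g$ whose $k$-th roots $\zeta_1,\ldots,\zeta_k$ are simultaneously regular values of $h$. Setting $F_g:=g^{-1}(\eta)\cap B_\epsilon$ and $F_h^{(j)}:=h^{-1}(\zeta_j)\cap B_\delta$, Milnor's bouquet theorem yields $\dim_\Q H^{n-1}(F_g;\Q)=\mu(\mathbf{V})$ and $\dim_\Q H^{n-1}(F_h^{(j)};\Q)=\mu(\mathbf{W})$. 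The identity $g\circ F=h^k$ then produces the crucial decomposition
\[
F^{-1}(F_g)\cap B_\delta \;=\; \{h^k=\eta\}\cap B_\delta \;=\; \bigsqcup_{j=1}^{k} F_h^{(j)},
\]
exhibiting the preimage of the Milnor fiber of $g$ as $k$ pairwise disjoint copies of the Milnor fiber of $h$.

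Fix a single component $F_h^{(j)}$. By construction $F$ sends $F_h^{(j)}$ into $F_g$, and the restriction $F\colon F_h^{(j)}\to F_g$ is a finite, proper, nonconstant holomorphic map between connected complex manifolds of dimension $n-1$. By the open mapping theorem its image is open, by properness it is closed, and the connectedness of $F_g$ (for $n\ge 2$; the case $n=1$ is trivial) forces this restriction to be surjective, of some positive integer degree $d_W=(\deg F)/k$. The transfer for a finite surjective holomorphic map furnishes a map $F_!$ on rational cohomology with $F_!\circ F^*=d_W\cdot\mathrm{id}$, and so
\[
F^*\colon H^{n-1}(F_g;\Q)\longrightarrow H^{n-1}(F_h^{(j)};\Q)
\]
is injective; comparing dimensions yields $\mu(\mathbf{V})\le\mu(\mathbf{W})$. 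The main technical point I expect to require care is the simultaneous bookkeeping of the radii $\delta,\epsilon$ and the regular value $\eta$ so that the displayed decomposition consists of honest Milnor fibers mapping into an honest Milnor fiber of $g$; once this is arranged, the openness/properness/transfer step is standard.
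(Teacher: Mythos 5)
Your overall strategy (reduce to $g\circ F=h^k$, compare Milnor fibers, and get injectivity of $F^*$ on $H^{n-1}$ via a transfer for finite surjective maps) is essentially the strategy of the paper, which uses the dual statement (surjectivity of $F_*$ on homology for proper finite surjective maps). However, the step you flagged as ``bookkeeping'' is precisely where the argument breaks: with the radii arranged so that $F(\overline{B_\delta})\subset B_\epsilon$, the restriction $F\colon F_h^{(j)}\to F_g$ is neither proper nor surjective, and in fact it can never be surjective for $n\ge 2$. Indeed $F(F_h^{(j)})$ lies in the compact set $F(\overline{B_\delta})\subset B_\epsilon$, while $F_g=g^{-1}(\eta)\cap B_\epsilon$ is a closed, positive-dimensional analytic subset of $B_\epsilon$ and hence is not contained in any compact subset of $B_\epsilon$ (a compact positive-dimensional analytic set would violate the maximum principle); the example $F(z)=z/2$ already makes this concrete. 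Properness fails for the same reason: points of $F_h^{(j)}$ tending to the sphere $\partial B_\delta$ are sent into a compact part of $F_g$, so compact subsets of $F_g$ have non-compact preimages. Consequently the open-plus-closed argument, the degree $d_W=(\deg F)/k$ (which is anyway not needed and not justified), and the transfer map $F_!$ are all unavailable for the restriction as you have set it up.

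The missing ingredients are a third radius and two comparison steps, which is exactly what the paper's proof supplies. One must choose $\eta_0>0$ so small that (the local component of) $F^{-1}(B(0,\eta_0))$ is contained in the Milnor ball of $h$; then $F$ restricted to $P:=\{h=\zeta_j\}\cap F^{-1}(B(0,\eta_0))$ is a proper, finite, surjective map onto the small fiber $A_c:=g^{-1}(\eta)\cap B(0,\eta_0)$, and there your transfer (or the quoted surjectivity of $F_*$ on homology) does apply. But $P$ is only an open piece of the Milnor fiber of $h$, and $A_c$ is not the Milnor fiber $A_c':=g^{-1}(\eta)\cap B(0,\epsilon)$ you started from, so two further facts are needed: (i) $\eta$ must be chosen so small that the fibers $g=c$ are transverse to all spheres $S_\rho$ with $\eta_0\le\rho\le\epsilon$, making $A_c$ a deformation retract of $A_c'$ (so generators of $H_{n-1}(A_c)$ remain generators of $H_{n-1}(A_c')$); and (ii) one must pass from $P$ to the honest Milnor fiber $B_c':=\{h=\zeta_j\}\cap B_\delta$ via the inclusion $P\subset B_c'$ and the commutativity of $F$ with these inclusions, concluding that the image of $H_{n-1}(B_c')\to H_{n-1}(A_c')$ contains all generators, whence ${\rm rank}\, H_{n-1}(B_c')\ge {\rm rank}\, H_{n-1}(A_c')$, i.e.\ $\mu(\mathbf{W})\ge\mu(\mathbf{V})$. (Note that one cannot instead argue ${\rm rank}\, H^{n-1}(P)\le \mu(\mathbf{W})$ directly: an open subset of the Milnor fiber may have larger cohomology.) Without these steps, the disjoint decomposition of $F^{-1}(F_g)\cap B_\delta$ by itself does not yield the inequality.
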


\begin{proof} 
We can assume that $n>1.$  Take $\epsilon >0$ so small that all spheres $S_\eta=\{ z: |z|=\eta\}$ are transversal to $V$ for $0<\eta\le\epsilon$. Let $g$ be a reduced equation for $V.$ We can assume that for $|c|<\delta$ the fibers $\{f(z)=c\}$ are transversal to $S_\epsilon$ and they  are Milnor fibers for $g$ in  $B(0,\epsilon)$ (see \cite{mil}). Take a ball $B'$ so small that $F(B')\subset B(0,\epsilon).$ There is an $\eta_0>0$ so small that $F^{-1}(B(0,\eta_0))\subset B'.$ We can take $\delta_0$ so small that for every $C$ with $|c|\le\delta_0$ fibers $g=c$ are transversal to spheres $S_\eta$ for all $\eta_0\le \eta \le \epsilon.$ 

Let $h$ be a reduced (local) equation of $W=F^{-1}(V).$ By the holomorphic Nullstellensatz we have $g\circ F=h^r$ where $r>0$ is a natural number. Hence $(g-c)(F)=\prod^r_{i=1} (h-\alpha_ic^{1/r})$, where $\alpha_i$ are all roots of order $r$ from $1.$ In particular we see that the fiber $\{h=\alpha_ic^{1/r}\}$ is transformed onto fiber $\{g=c\}.$ Let $A_c$ be a Milnor fiber $\{g=c\}\cap B(0,\eta_0)$ and let $B_c=F^{-1}(A_c)\cap \{h=c^{1/r}\}.$ Since the mapping $F: B_c\to A_c$ is finite, we have that $F_*: H_k(B_c)\to H_k(A_c)$ is surjective for every $k$
(see \cite{surj}).

By the Milnor theorem rank $H_{n-1}(A_c)=\mu({\bf V})=s\ge  1,$ where $\mu({\bf V})$ is the Milnor number of $g.$ In particular there are $n-1$ cycles $\alpha_1,...,\alpha_s\subset A_c$, such that $[\alpha_i]$ are generators of $H_{n-1}(A_c).$ Let $\beta_i; \ i=1,...,s$ be  cycles in $B_c$ such that $F(\beta_i)=\alpha_i.$ Let $A_c'=
\{g=c\}\cap B(0,\epsilon).$ By our assumptions the fiber $A_c$ is a deformation retract of $A_c'.$ Hence the cycles $\alpha_i; \ i=1,...,s$ are generators of  $H_{n-1}(A_c')$, too.  Let $B_c'=\{h=c^{1/r})\}\cap B'$ be a Milnor fiber of $h$ in $B'.$ 
Consider  the mapping $$F_* : H_{n-1}(B_c')\to H_{n-1}(A_c').$$ We have $F_*([\beta_i])= [{\alpha}_i] $. Hence rank $ H_{n-1}(B_c')\ge$ rank $H_{n-1}(A_c')$, i.e., $$\mu(F^{-1}(\bf{ V}))\ge \mu(\bf {V}).$$
\end{proof}

\begin{corollary}\label{cormain-theorem} Let $(\bf {V,0})\subset (\C^n,0)$ be a germ of a complex hypersurface and let $f: (\C^n,0)\to(\C^n,0)$ be a germ of a finite holomorphic mapping. Then if the germ $(F^{-1}(\bf{ V})),0)$ taken with the reduced structure is smooth, then also the germ 
$({\bf V,0})$ is smooth.
\end{corollary}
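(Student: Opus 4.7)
The plan is to derive the corollary directly from Theorem~\ref{main-theorem}. A reduced hypersurface germ is smooth at $0$ if and only if its Milnor number is $0$, so once the hypotheses of the theorem are verified, the inequality $\mu(F^{-1}({\bf V}))\ge \mu({\bf V})$ forces $\mu({\bf V})=0$ and hence $({\bf V},0)$ is smooth. A smooth reduced germ is automatically irreducible and has no singular points (trivially, an isolated singularity with Milnor number $0$), so the hypotheses imposed on $F^{-1}({\bf V})$ in Theorem~\ref{main-theorem} are automatic; the work is to verify that $({\bf V},0)$ itself is irreducible and with isolated singularity.

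For the irreducibility of $({\bf V},0)$, I would suppose ${\bf V}={\bf V}_1\cup {\bf V}_2$ is a nontrivial decomposition and write $g=g_1 g_2$ for the corresponding factorization of a reduced equation. The identity $g\circ F=h^r$ together with the fact that $\On$ is a UFD forces each $g_i\circ F$ to be a unit times a power of $h$, so $F^{-1}({\bf V}_1)$ and $F^{-1}({\bf V}_2)$ share the reduced support $\{h=0\}$. Since a finite holomorphic map between equidimensional complex manifolds is open, $F(\{h=0\})$ contains a neighborhood of $0$ in each ${\bf V}_i$, which forces ${\bf V}_1={\bf V}_2$, contradicting the nontrivial decomposition.

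For the isolated-singularity condition, I would argue by contradiction, assuming $\dim_0\Sing{{\bf V}}\ge 1$ and picking a $1$-dimensional component $C$ of $F^{-1}(\Sing{{\bf V}})$ through $0$ (which exists since $F$ is finite). Differentiating $g\circ F=h^r$ gives the identity
$$(\nabla g)(F)\cdot DF = r h^{r-1}\nabla h,$$
and on $C$ the left-hand side vanishes since $\nabla g\circ F\equiv 0$ there. For $r=1$ this immediately yields $\nabla h\equiv 0$ along $C$, so $C\subset \Sing{F^{-1}({\bf V})}$, contradicting the smoothness of $F^{-1}({\bf V})$. For $r\ge 2$ the right-hand side also vanishes on $C$, and a more delicate argument is needed: the same identity applied at any $p\in F^{-1}({\bf V})$ shows that $(\nabla g)(F(p))$ lies in the left kernel of $DF(p)$, so either $p$ is a ramification point of $F$ or $F(p)\in \Sing{{\bf V}}$. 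Combining this dichotomy with the openness of $F$ and the irreducibility of ${\bf V}$ forces $F^{-1}({\bf V})$ itself to sit inside the ramification locus of $F$, and this (together with the positive-dimensional singular locus of ${\bf V}$) yields the desired contradiction.

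With both reductions in place, Theorem~\ref{main-theorem} gives $\mu({\bf V})\le\mu(F^{-1}({\bf V}))=0$, so the Jacobian ideal of $g$ is the whole ring $\On$; equivalently some $\partial g/\partial x_i$ is a unit at $0$ and $({\bf V},0)$ is smooth. The principal difficulty in this plan is the $r\ge 2$ case of the isolated-singularity reduction, which I expect to require the ramification analysis sketched above rather than a direct application of the chain rule.
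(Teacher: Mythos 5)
Your overall logic (smooth $\Leftrightarrow$ $\mu=0$ for an isolated singularity, so the theorem's inequality finishes the job once its hypotheses hold) matches the paper, and two of your verifications are sound: the UFD argument for irreducibility of $({\bf V},0)$ works (since $F$ is finite, each $F^{-1}({\bf V}_i)$ is a nonempty hypersurface inside the irreducible set $\{h=0\}$, so the ${\bf V}_i$ would share the $(n-1)$-dimensional image $F(\{h=0\})$), and the chain-rule argument does dispose of the case $r=1$. But the heart of the corollary is precisely the case you leave open, and there the proposal has a genuine gap. For $r\ge 2$ the identity $(\nabla g)(F(p))\cdot DF(p)=r\,h(p)^{r-1}\nabla h(p)$ only tells you that every point of $W=\{h=0\}$ at which $F(p)\notin\Sing{{\bf V}}$ is a critical point of $F$, hence that $W$ lies in the ramification hypersurface $\{\det DF=0\}$. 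That conclusion carries no contradiction: it holds automatically whenever $r\ge 2$, even when ${\bf V}$ is smooth (e.g.\ $F(x,y)=(x,y^2)$, $g=y$, $h=y$, $r=2$), so "$W\subset\{\det DF=0\}$ together with $\dim\Sing{{\bf V}}\ge 1$ yields the desired contradiction" is an assertion, not an argument. Nothing in the dichotomy, the openness of $F$, or the irreducibility of ${\bf V}$ is brought to bear on the positive-dimensionality of $\Sing{{\bf V}}$, and I do not see how to complete the proof along these lines without a substantially new idea; you yourself flag this as the unresolved difficulty.

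The paper closes exactly this case by a different mechanism: induction on $n$ via generic hyperplane sections. If $\dim\Sing{{\bf V}}\ge 1$, a generic hyperplane $L$ meets $\Sing{{\bf V}}$, so the section $L\cap V$ is again singular; on the other hand the pulled-back linear system $F^*(\mathcal{L})$ has no base points, so by a Bertini-type argument the generic $F^{-1}(L)$ and $F^{-1}(L\cap V)=F^{-1}(L)\cap W$ are smooth. Restricting $F$ to the smooth hypersurface $F^{-1}(L)\to L$ gives a finite germ of $(\C^{n-1},0)\to(\C^{n-1},0)$ pulling back the singular hypersurface $L\cap V$ to a smooth one, contradicting the inductive hypothesis (the base case being the isolated-singularity situation handled by the theorem, which is automatic for $n=2$). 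If you want to salvage your write-up, replace the $r\ge 2$ ramification sketch by this slicing induction; the rest of your reductions can stay as they are.
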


\begin{proof}
 We proceed by induction. Note that if  ${\bf V}$ has an isolated singularity, then it  is smooth if and only if $\mu({\bf V})=0$. In particular in this case  our Corollary follows directly from Theorem \ref{main-theorem}. 
 In particular we get the thesis if  $n=2.$  

Now assume that $n>2$ and ${\bf S}:=Sing(\bf {V,0})$ has dimension greater than $0.$ Let $\mathcal{L}$ be a linear system of hyperplanes in $\C^n.$ On $V$ let us consider the induced linear system $\mathcal{L}_V$. 
Then a generic member $L_V$ of $\mathcal{L}_V$ is singular, because $S\cap L\not=\emptyset$ (see Lemma 2.5 in \cite{j-b}). Moreover $F^*(L_V)$ (and $F^{-1}(L)$) are  smooth because the system $F^*(\mathcal{L}_V)$ has no base points on smooth $W=F^{-1}(V)\subset F^{-1}(L)$ (the system $F^*(\mathcal{L})$ has no base points on $\C^n$). Since dim $L\cap V<$ dim $V$, this is a contradiction.
\end{proof}
\medskip

{\it Acknowledgement}. The author is grateful to prof. Maciej Denkowski and prof. Karolina Zaj\c ac for helpful conversations.

\vspace{5mm}

\end{document}